\theoremstyle{remark}{

\newtheorem{Ex}{{\rm Example}}

\newtheorem{Prob}{{\rm Problem}}

}
\theoremstyle{plain}
{

\newtheorem{Thm}{Theorem}

}
\begin{document}
\title[Morse functions with given level sets on $3$-dimensional manifolds]{On reconstructing Morse functions with prescribed level sets on $3$-dimensional manifolds and a necessary and sufficient condition for the reconstruction}
\author{Naoki kitazawa}
\keywords{Morse functions. Surfaces. $3$-dimensional manifolds. \\
\indent {\it \textup{2020} Mathematics Subject Classification}: Primary~57R45. Secondary~57R19.}

\address{Osaka Central Advanced Mathematical Institute (OCAMI) \\
	3-3-138 Sugimoto, Sumiyoshi-ku Osaka 558-8585
	TEL: +81-6-6605-3103
}
\email{naokikitazawa.formath@gmail.com}
\urladdr{https://naokikitazawa.github.io/NaokiKitazawa.html}
\maketitle
\begin{abstract}
We discuss a necessary and sufficient condition for reconstruction of Morse functions with prescribed ({\it regular}) {\it level sets} on $3$-dimensional manifolds. The present work strengthens a previous result of the author where only sufficient conditions are studied. Our new work is also regarded as a kind of addenda.
\end{abstract}
\section{Introduction.}
\label{sec:1}

{\it Morse} functions have been fundamental and important tools in mathematics, mainly in various geometry of manifolds, since the birth of the theory of Morse functions in the 20th century. 

For arguments, we introduce fundamental terminologies, notions and notation. Let ${\mathbb{R}}^n$ denote the $n$-dimensional Euclidean space, which is one of simplest smooth manifolds. Let $\mathbb{R}:={\mathbb{R}}^1$.
For a smooth map $c:X \rightarrow Y$ between smooth manifolds, a {\it singular} point $p \in X$ {\it of} $c$ means a point where the rank of the differential of $c$ at $p$ is smaller than both the dimensions of $X$ and $Y$. A {\it singular value} $c(p)$ {\it of} $c$ is a value realized as a value of $c$ at a singular point $p$ of $c$. 
We also use "{\it critical}" instead of "{\it singular}" in the real-valued function case $Y:=\mathbb{R}$.
For a real valued function $c$, each preimage $c^{-1}(q)$ is called a {\it level set}. If this contains no critical point, it is also called a {\it regular} level set.
A {\it Morse} function $c:X \rightarrow \mathbb{R}$ is a smooth function whose critical point is always in the interior of the $m$-dimensional manifold $X$ and whose critical point $p$ is of the form $c(x_1 ,\cdots, x_m)={\Sigma}_{j=1}^{m-i(p)} {x_j}^2-{\Sigma}_{j=1}^{i(p)} {x_{m-i(p)+j}}^2+c(p)$ where a suitable local coordinate and an integer $0 \leq i(p) \leq m$ are chosen. The integer $i(p)$ is shown to be chosen uniquely and it is the {\it index} of the critical point $p$ of $c$ and critical points of $c$ appear discretely. The textbooks \cite{milnor1, milnor2} are on related fundamental theory and the textbook \cite{golubitskyguillemin} is on related singularity theory.

Surprisingly, very fundamental problems on Morse functions are still studied and developing. Our study is on reconstruction of Morse functions or more generally, nice smooth functions with prescribed (regular) level sets. This is very new, and founded in \cite{sharko}. The papers \cite{masumotosaeki, michalak} are studies following \cite{sharko}. These studies are on smooth functions which are at critical points represented by certain elementary polynomials or Morse functions, on closed surfaces. 
The author has studied such reconstruction respecting manifolds of (regular) level sets in \cite[Theorem 1]{kitazawa1}. 

The {\it Reeb space} $R_c$ of a smooth real-valued function $c:X \rightarrow \mathbb{R}$ on an $m$-dimensional closed manifold $X$ is the space of all connected components of all level sets and defined as a quotient space of $X$. In some nice cases, such spaces are digraphs ({\it Reeb digraphs}) naturally. The vertex set of this consists of all connected components containing some critical points. The orientation of the digraph is induced from the values of the function, canonically.
The author has constructed Morse functions, more generally, so-called {\it Morse-Bott} functions, or functions of suitably generalized classes, on suitable closed, connected and orientable manifolds of dimension $m=3$ whose {\it Reeb digraphs} are isomorphic to given non-trivial finite and acyclic digraphs and whose regular level sets are prescribed closed and connected surfaces. In \cite{saeki2}, Saeki considers a very general situation and methods, where types of critical points or singularities of the functions are not explicitly presented. In \cite[Main Theorems 1 and 2]{kitazawa3}, the author considers a case generalizing \cite[Theorem 1]{kitazawa1}. Level sets containing no critical point are generalized to ($m-1$)-dimensional closed and connected manifolds of boundaries of $m$-dimensional compact and connected manifolds of a certain class (where $m \geq 2$): {\it most fundamental handlebodies} ({\it MFHs}). The class of boundaries of $m$-dimensional MFHs contains an ($m-1$)-dimensional (standard) sphere $S^{m-1}$, manifolds of the form ${\sharp}_{j=1}^l (S^{k_j} \times S^{m-k_j-1})$ with $l \geq 0$ and $1 \leq k_j \leq m-2$, and all $3$-dimensional closed, connected and orientable manifolds (in the case $m=4$). A manifold diffeomorphic to the boundary of an MFH is also called a {\it BMFH} in \cite{kitazawa3}. In \cite[Main Theorems 1 and 2]{kitazawa2}, the author has extended \cite[Theorem 1]{kitazawa1} to the non-orientable case and presented cases we cannot discuss by \cite{kitazawa3}. For example, in the case the dimension of the manifold $X$ is $m=3$, a projective plane ${\mathbb{R}P}^2$ can appear as a connected component of a regular level set of the function and this cannot be the boundary of any $3$-dimensional compact manifold. 

We formulate our problem.  
\begin{Prob}
\label{prob:1}
Let $m \geq 2$ be an integer, Let $a<s<b$ be three real numbers.
Let $F_{a}$ and $F_{b}$ be {\rm (}$m-1${\rm )}-dimensional closed manifolds which may not be connected.
Can we reconstruct some Morse function ${\tilde{f}}_{F_{a},F_{b}}:{\tilde{M}}_{F_{a},F_{b}} \rightarrow \mathbb{R}$ on a suitable $m$-dimensional compact and connected manifold ${\tilde{M}}_{F_{a},F_{b}}$ enjoying the following properties.
\begin{enumerate}
\item The image is $[a,b]:=\{t \mid a \leq t \leq b\}$. The number of critical values of ${\tilde{f}}_{F_{a},F_{b}}:{\tilde{M}}_{F_{a},F_{b}} \rightarrow \mathbb{R}$ is $1$ and the unique value is $s$.
\item The regular level set ${{\tilde{f}}_{F_{a},F_{b}}}^{-1}(a)$ (${{\tilde{f}}_{F_{a},F_{b}}}^{-1}(b)$) is diffeomorphic to $F_{a}$ (resp. $F_{b}$). The level set ${{\tilde{f}}_{F_{a},F_{b}}}^{-1}(s)$ is connected. The boundary of the manifold ${\tilde{M}}_{F_{a},F_{b}}$ is ${{\tilde{f}}_{F_{a},F_{b}}}^{-1}(a) \sqcup {{\tilde{f}}_{F_{a},F_{b}}}^{-1}(b)$.
\end{enumerate}
  
\end{Prob}
Michalak has solved the case $m=2$ completely and affirmatively in \cite{michalak}. In \cite{kitazawa1}, followed by \cite{kitazawa3}, the author has solved the case $m=3,4$ with $F_a$ and $F_b$ being orientable. In \cite{kitazawa2}, the author has partially solved the case $m=3$ where these surfaces may not be orientable. Our new result is a necessary and sufficient condition for Problem \ref{prob:1} to be solved affirmatively. In \cite{kitazawa2}, the author has found a sufficient condition only. In these studies, explicit construction of explicit local Morse functions is essential. In addition, in considering non-orientable surfaces as regular level sets in \cite{kitazawa2}, some elementary and original combinatorial arguments are also essential and they are not in \cite{kitazawa1} or \cite{kitazawa3}.

We review and introduce several properties on closed surfaces. We also introduce elementary and numerical topological invariants $P(S)$ and $P_{\rm o}(S)$ for closed surfaces $S$ which may not be connected.
 \begin{itemize}
 \item A closed and connected surface $S$ is diffeomorphic to a connected sum $({\sharp}_{j_1=1}^{l_1} (S^1 \times S^1)) {\sharp} ({\sharp}_{j_2=1}^{l_2} {{\mathbb{R}}P}^2)$.
 \item In the previous scene, $S$ is orientable if and only if $l_2=0$ and $S$ is a sphere if and only if $l_1=l_2=0$. If $S$ is non-orientable, then we can choose $l_1=0$. 
 \item In the previous representations, the integer $l_2=0$ gives a topological invariant for closed, connected and orientable surfaces $S$. Let $P(S):=l_2=0$ for a closed, connected, and orientable surface $S$ and in this situation, $l_1$ gives a topological invariant for closed, connected and orientable surfaces $S${\rm :} this is the so-called ({\it orientable}) {\it genus} of a closed, connected and orientable surface $S$. The integer $l_2$ also gives a topological invariant for closed, connected, and non-orientable surfaces $S$ under the constraint that $l_1=0$ and in this scene, let $P(S):=l_2${\rm :} this is the so-called {\it non-orientable genus} of a closed, connected and non-orientable surface $S$.
\item We can extend $P(S)$ for closed surfaces $S$ which may not be connected, in the additive way, canonically. In other words, for the disjoint union $S={\sqcup}_{j=1}^l S_{{\rm c},j}$ of $l_0>0$ closed and connected surfaces $S_{{\rm c},j}$, $P(S)={\Sigma}_{j=1}^l P(S_{{\rm c},j})$ . The integer $P(S)$ gives a numerical topological invariant for closed surfaces $S$. 
\item Let $P_{\rm o}(S) \leq P(S)$ denote the number of connected components $S_{{\rm c},j}$ of the closed surface $S={\sqcup}_{j=1}^l S_{{\rm c},j}$ with $P(S_{{\rm c},j})$ being odd.
 \end{itemize}

Theorem \ref{thm:1} is our main result.  
\begin{Thm}[An extension of \cite{kitazawa2}]
\label{thm:1}
Problem \ref{prob:1} is solved affirmatively in the case $m=3$ if and only if the following hold.
\begin{enumerate}
\item \label{thm:1.1} The value $P_{\rm o}(F_b)-P_{\rm o}(F_a)$ is even. This is equivalent to the fact that the closed surface $F_a \sqcup F_b$ is the boundary of some $3$-dimensional compact and connected manifold{\rm :} in short two closed surfaces $F_a$ and $F_b$ are {\it cobordant}.
\item \label{thm:1.2} Both the relations $P_{\rm o}(F_b) \leq P(F_a)$ and $P_{\rm o}(F_a) \leq P(F_b)$ hold.
\end{enumerate}
\end{Thm}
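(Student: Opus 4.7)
My plan is to establish necessity and sufficiency separately; the new content is the necessity of condition~(2). Necessity of condition~(1) is immediate: $\tilde{M}_{F_a,F_b}$ realises $F_a \sqcup F_b$ as the boundary of a compact $3$-manifold, so $F_a$ and $F_b$ define the same class in the unoriented cobordism group $\mathfrak{N}_2 \cong \mathbb{Z}/2$, and the class of a closed surface $S$ there is detected by $\chi(S) \bmod 2 = P(S) \bmod 2 = P_{\mathrm{o}}(S) \bmod 2$.

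For necessity of condition~(2) I would normalise the singular fiber $\Sigma := {\tilde f}^{-1}(s)$ and run a parity-plus-monotonicity argument. Since $F_a$ and $F_b$ are nonempty and $\Sigma$ is connected, no critical point of index $0$ or $3$ can occur (each would leave an isolated point in $\Sigma$), so all critical points have index $1$ or $2$, say $k$ of index $1$ and $l$ of index $2$. With $F_\pm := {\tilde f}^{-1}(s \pm \varepsilon)$, each index-$1$ critical point has two descending disks in $F_-$ and an ascending annulus in $F_+$, with the roles reversed at each index-$2$ point. Replacing each index-$2$ annulus in $F_-$ by a pair of disks (equivalently, each index-$1$ annulus in $F_+$ by a pair of disks) produces a smooth closed surface $\hat F$, and $\Sigma$ is $\hat F$ with the centres of each of the $k+l$ distinguished disk-pairs identified. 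Then $F_a \cong F_-$ is recovered from $\hat F$ by attaching a $1$-handle at each of the $l$ index-$2$ pairs, and $F_b \cong F_+$ by attaching a $1$-handle at each of the $k$ index-$1$ pairs; furthermore $\Sigma$ is connected if and only if the graph $H$ with components of $\hat F$ as vertices and one edge per critical point (joining the two components containing the associated disk-pair) is connected.

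Setting $v := P_{\mathrm{o}}(\hat F)$, I would then argue: (i) each component of $F_a$ with odd non-orientable genus is built by merging a family of components of $\hat F$ whose $P$-values sum to an odd integer, so contains at least one odd-$P$ member of $\hat F$, and distinct odd components of $F_a$ use disjoint such families, giving $P_{\mathrm{o}}(F_a) \leq v$; (ii) attaching a $1$-handle to a closed $2$-manifold never decreases $P$---a routine case check via Dyck's theorem $T^{g} \sharp (\sharp^{p}{\mathbb{R}P}^{2}) \cong \sharp^{2g+p}{\mathbb{R}P}^{2}$ for $p \geq 1$ and the fact that a non-orientable $1$-handle on an orientable genus-$g$ surface yields non-orientable genus $2g+2$---so $P(F_b) \geq P(\hat F) \geq P_{\mathrm{o}}(\hat F) = v$. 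Combining yields $P_{\mathrm{o}}(F_a) \leq P(F_b)$, and the symmetric inequality $P_{\mathrm{o}}(F_b) \leq P(F_a)$ follows by interchanging $F_a$ and $F_b$.

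For sufficiency I plan to extend the explicit construction of \cite{kitazawa2}. Given $F_a, F_b$ satisfying conditions~(1) and (2), the idea is to reverse-engineer $\hat F$ as a disjoint union of copies of $S^{2}$, ${\mathbb{R}P}^{2}$ and possibly tori, in quantities dictated by $P$ and $P_{\mathrm{o}}$ of $F_a, F_b$, together with two disjoint families of marked disk-pairs so that tubing across one set reproduces $F_a$, tubing across the other reproduces $F_b$, and the combined pairing graph $H$ is connected. Condition~(2) supplies enough odd-$P$ components of $\hat F$ to realise $P_{\mathrm{o}}(F_a)$ and $P_{\mathrm{o}}(F_b)$ simultaneously, while (1) fixes the global parity; the Morse function is then assembled from the local Morse models of \cite{kitazawa2}. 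The main obstacle I anticipate is this combinatorial bookkeeping---choosing $\hat F$ and the pairings so that $H$ is connected without over-committing any single odd-$P$ component of $\hat F$---and the elementary combinatorial arguments referenced in Section~\ref{sec:1} should carry this step.
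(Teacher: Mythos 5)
Your necessity argument is correct and is in substance the same as the paper's. Your normalized surface $\hat F$ (the singular fiber $\Sigma$ with its $k+l$ nodes resolved) is exactly the intermediate surface the paper calls $F_s$, obtained there by attaching all $2$-handles to $F_a\times\{1\}$ before the $1$-handles (justified by arguments as in \cite[Lemma 6.6]{saeki1}); your two monotonicity facts --- tubing never increases $P_{\rm o}$ (your parity-of-$\chi$/merging argument in (i)) and never decreases $P$ (your (ii) via Dyck's theorem) --- are precisely the stepwise bookkeeping the paper performs on $P(F_{a,j})$, $P_{\rm o}(F_{s,j})$, yielding $P_{\rm o}(F_a)\leq P_{\rm o}(\hat F)\leq P(\hat F)\leq P(F_b)$ and the symmetric inequality. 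Working directly with the nodal fiber rather than reordering handles is a presentational difference only; the exclusion of indices $0$ and $3$ and the identification $F_a\cong F_-$, $F_b\cong F_+$ are also implicit in the paper.

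The genuine gap is in sufficiency. What you offer there is a plan, not a proof: you propose to reverse-engineer $\hat F$ from spheres, projective planes and tori with two families of marked disk-pairs, and you yourself flag the unresolved obstacle (choosing the pairings so that the graph $H$ is connected while the odd-$P$ components of $\hat F$ are not over-committed between the $F_a$-side and $F_b$-side tubings). That bookkeeping is exactly the delicate part, and it is never carried out. Moreover it is unnecessary: the paper's route is to observe that conditions (\ref{thm:1.1}) and (\ref{thm:1.2}) are arithmetically equivalent to the hypotheses of \cite[Main Theorem 2]{kitazawa2}, namely the parity condition together with one of (A) $P_{\rm o}(F_a)=P_{\rm o}(F_b)$, (B) $0<P_{\rm o}(F_b)-P_{\rm o}(F_a)\leq {\Sigma}_{j} P^{\prime}(F_{a,j})$, (C) the symmetric inequality. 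The key identity is ${\Sigma}_{j} P^{\prime}(F_{a,j})=P(F_a)-P_{\rm o}(F_a)$ (since $P^{\prime}(S)=P(S)$ or $P(S)-1$ according to the parity of $P(S)$), so (B) is literally the inequality $P_{\rm o}(F_b)\leq P(F_a)$ when $P_{\rm o}(F_b)>P_{\rm o}(F_a)$, and likewise for (C); hence conditions (\ref{thm:1.1}) and (\ref{thm:1.2}) place you in case (A), (B) or (C) and the construction of \cite{kitazawa2} applies verbatim (this is Theorem \ref{thm:2}). If you wish to keep your self-contained construction instead, you must actually specify $\hat F$ and the two pairings and verify connectivity of $H$; as written, that step is missing.
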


We prove this in the next section. We first review \cite{kitazawa2} to check that our conditions (\ref{thm:1.1}) and (\ref{thm:1.2}) give a sufficient condition for Problem \ref{prob:1} (Theorem \ref{thm:3} with Theorem \ref{thm:2}). 
It is a main ingredient to apply well-known correspondence between critical points of Morse functions and so-called {\it handles} and arguments similar to \cite[Lemma 6.6]{saeki1} and related ones to show that these conditions (\ref{thm:1.1}) and (\ref{thm:1.2}) also give a necessary condition. 


 Last, the third section is for our conclusion and additional remark.

\section{On our main result, Theorem \ref{thm:1}.}
\subsection{The Reeb graph of a smooth function.}
We explain the {\it Reeb space} ({\it graph} and {\it digraph}) of a smooth function $c:X \rightarrow \mathbb{R}$ again and this does not contradict the short exposition in the previous section.

The {\it Reeb space} of a smooth function is the space of all connected components of level sets (\cite{reeb}). Reeb spaces are regarded as graphs naturally in considerable cases (\cite[Theorem 3.1]{saeki2}: see also \cite{izar} in the Morse function case).
We consider a Morse function $c:X \rightarrow \mathbb{R}$ on a closed manifold $X$ or a Morse function ${\tilde{f}}_{F_{a},F_{b}}:{\tilde{M}}_{F_{a},F_{b}} \rightarrow \mathbb{R}$ in Problem \ref{prob:1}. We also use $c:={\tilde{f}}_{F_{a},F_{b}}:X:={\tilde{M}}_{F_{a},F_{b}} \rightarrow \mathbb{R}$ in the latter case. 
We can define the equivalence relation ${\sim}_c$ on $X$ by the following: $x_1 {\sim}_c x_2$ if and only if $x_1$ and $x_2$ are in a same connected component of some level set $c^{-1}(y)$ ($y \in \mathbb{R}$). The {\it Reeb space $R_c:=X / {\sim}_c$ of $c$} is defined as the quotient space, has the structure of a graph by the following rule and is also called the {\it Reeb graph} of $c$: a point $v \in R_c$ is a vertex if and only if $v$ satisfies either the following where $q_c:X \rightarrow R_c$ denotes the quotient map.
\begin{itemize}
\item The connected component ${q_c}^{-1}(v)$ contains some critical points of $c$. 
\item The connected component ${q_c}^{-1}(v)$ is a connected component of ${{\tilde{f}}_{F_{a},F_{b}}}^{-1}(a) \sqcup {{\tilde{f}}_{F_{a},F_{b}}}^{-1}(b)$ where $c:={\tilde{f}}_{F_{a},F_{b}}$.
\end{itemize}

Respecting the values of the function $c$, edges of the Reeb graph are oriented. 
In other words, an edge $e$ incident to exactly two vertices $v_{e,1}$ and $v_{e,2}$ is oriented from $v_{e,1}$ to $v_{e,2}$ if the value of $c$ on ${q_c}^{-1}(v_{e,2})$ is greater than that on ${q_c}^{-1}(v_{e,1})$. Thus, the Reeb graph is regarded as a digraph (the {\it Reeb digraph} $\overrightarrow{R_c}$ of $c$).  
 
\subsection{Our previous result related to sufficiency for Theorem \ref{thm:1} and checking the sufficiency.}
Before our proof, we review \cite[Main Theorem 2]{kitazawa2}.

This is regarded as a theorem on a sufficient condition for Problem \ref{prob:1} to be solved affirmatively. 

For the graph $G$ in \cite[Main Theorem 2]{kitazawa2}, $g:G \rightarrow \mathbb{R}$ is a continuous function which is injective on each edge. This function makes $G$ a digraph $\overrightarrow{G}$ each edge of which is oriented as in the case of Reeb digraphs. An integer-valued function $r_G$ on the edge set $E_G$ of the graph $G$ is also associated.
Consider a vertex $v \in G$ which at least one edge enters and which at least one edge departs from. 

Second, the set $A_{{\rm up},v}$ ($A_{{\rm low},v}$) in \cite{kitazawa2} is the set of all edges $e$ departing from (resp. entering) $v$ and with $r_G(e)$ being odd and negative. The number ${\deg}_{{\rm out},r_G<0,r_G \text{ is odd.}}(v)$ 
(resp. ${\deg}_{{\rm in},r_G<0,r_G \text{ is odd.}}(v)$) stands for the size of the set. 

Third, the set $B_{{\rm up},v}$ ($B_{{\rm low},v}$) is the set of all edges $e$ satisfying the following: the integer $r_G(e)$ is negative and the edge $e$ departs from (resp. enters) the vertex $v$. 

The integer ${r^{\prime}}_G(e)$ is defined for an edge $e \in B_{{\rm up},v} \bigcup B_{{\rm low},v}$ and as the maximal even integer smaller than or equal to the absolute value $|r_G(e)|$ of $r_G(e)$. If $e \notin  B_{{\rm up},v} \bigcup B_{{\rm low},v}$, then as an original rule here, let ${r^{\prime}}_G(e):=0$.

Based on this, we present \cite[Main Theorem 2]{kitazawa2}.
\begin{Thm}
\label{thm:2}
In the situation above, let $G_v$ be a small connected neighborhood of $v$ in $G$. We see this as a digraph $\overrightarrow{G_v}$ induced canonically from $\overrightarrow{G}${\rm :} let $\{e_{a,j}\}_{j=1}^{i_a}$ {\rm (}resp. $\{e_{b,j}\}_{j=1}^{i_b}${\rm )} denote the edges entering {\rm (}resp. departing from{\rm )} $v$. 
We define the values $r_G(e_{a,j})$ and $r_G(e_{b,j})$ and the values ${r^{\prime}}_G(e_{a,j})$ and ${r^{\prime}}_G(e_{b,j})$ canonically{\rm :} we consider the unique edge containing each egde in $G$ and we define the values as the values of the original functions $r_G$ and ${r^{\prime}}_G$ at these edges of $G$.

Suppose also the following.
\begin{enumerate}
\item \label{thm:2.1} Either of the following holds.
\begin{enumerate}
\item  \label{thm:2.1.1} ${\deg}_{{\rm out},r_G<0,r_G \text{ is odd.}}(v)={\deg}_{{\rm in},r_G<0,r_G \text{ is odd.}}(v)$. 
\item  \label{thm:2.1.2} ${\deg}_{{\rm out},r_G<0,r_G \text{ is odd.}}(v)-{\deg}_{{\rm in},r_G<0,r_G \text{ is odd.}}(v)$ is positive, even, and smaller than or equal to ${\Sigma}_{e \in B_{{\rm low},v}} {r^{\prime}}_G(e)$.
\item  \label{thm:2.1.3} ${\deg}_{{\rm in},r_G<0,r_G \text{ is odd.}}(v)-{\deg}_{{\rm out},r_G<0,r_G \text{ is odd.}}(v)$ is positive, even, and smaller than or equal to ${\Sigma}_{e \in B_{{\rm up},v}} {r^{\prime}}_G(e)$.
\end{enumerate}
\item \label{thm:2.2} Let $F_a$ be a closed surface $F_a={\sqcup}_{j=1}^{i_a} F_{a,j}$ with $F_{a,j}$ being connected and $F_b$ a closed surface $F_b={\sqcup}_{j=1}^{i_b} F_{b,j}$ with $F_{b,j}$ being connected. Furthermore, the following hold.
\begin{enumerate}
\item \label{thm:2.2.1} The surface $F_{a,j}$ is diffeomorphic to ${\sharp}_{j=1}^{r_G(e_{a,j})} (S^1 \times S^1)$ if $r_G(e_{a,j}) \geq 0$ and $F_{a,j}$ is diffeomorphic to ${\sharp}_{j=1}^{-r_G(e_{a,j})} {\mathbb{R}P}^2$ if $r_G(e_{a,j})<0$.
\item \label{thm:2.2.2} The surface $F_{b,j}$ is diffeomorphic to ${\sharp}_{j=1}^{r_G(e_{b,j})} (S^1 \times S^1)$ if $r_G(e_{b,j}) \geq 0$ and $F_{b,j}$ is diffeomorphic to ${\sharp}_{j=1}^{-r_G(e_{b,j})} {\mathbb{R}P}^2$ if $r_G(e_{b,j})<0$.
\end{enumerate}
\end{enumerate}
In this situation, we can construct a Morse function ${\tilde{f}}_{F_{a},F_{b}}:{\tilde{M}}_{F_{a},F_{b}} \rightarrow \mathbb{R}$ as in Problem \ref{prob:1} whose Reeb digraph is isomorphic to $\overrightarrow{G_v}$ through an isomorphism
 $\phi:\overrightarrow{R_{{\tilde{f}}_{F_{a},F_{b}}}} \rightarrow \overrightarrow{G_v}$ mapping the vertex ${v_{a,j}}$ with ${q_{{\tilde{f}}_{F_{a},F_{b}}}}^{-1}(v_{a,j})=F_{a,j}$ to the vertex from which $e_{a,j}$ departs and the vertex ${v_{b,j}}$ with ${q_{{\tilde{f}}_{F_{a},F_{b}}}}^{-1}(v_{b,j})=F_{b,j}$ to the vertex which $e_{b,j}$ enters.
\end{Thm}
We have the following.
\begin{Thm}\label{thm:3}
In Theorem \ref{thm:1}, the two conditions {\rm (}\ref{thm:1.1}{\rm )} and {\rm (}\ref{thm:1.2}{\rm )} give a sufficient condition to solve Problem \ref{prob:1} affirmatively in the case $m=3$.
\end{Thm}
\begin{proof}
To complete the proof, it is sufficient to prove that we can have a situation of Theorem \ref{thm:2}, respecting the conditions of Theorem \ref{thm:1}. We prove this.

Let $F_a={\sqcup}_{j=1}^{k_a} F_{a,j}$ and $F_b={\sqcup}_{j=1}^{k_b} F_{b,j}$ with $k_a$ and $k_b$ being positive integers and $F_{a,j}$ and $F_{b,j}$ being connected. \\
\ \\
Case 1 $P_{\rm o}(F_a)=P_{\rm o}(F_b)$.\\
In this case, in the assumption of Theorem \ref{thm:2}, we put $i_a:=k_a$ and $i_b:=k_b$ and we define $r_G(e_{a,j})$ and $r_G(e_{b,j})$ as follows.
\begin{itemize}
\item $r_G(e_{a,j}):=\text{(The orientable genus of $F_{a,j}$)}$ if $F_{a,j}$ is orientable and $r_G(e_{a,j}):=-\text{(The non-orientable genus of $F_{a,j}$)}=P(F_{a,j})$ if $F_{a,j}$ is non-orientable.
\item $r_G(e_{b,j}):=\text{(The orientable genus of $F_{b,j}$)}$ if $F_{b,j}$ is orientable and $r_G(e_{b,j}):=-\text{(The non-orientable genus of $F_{b,j}$)}=P(F_{b,j})$ if $F_{b,j}$ is non-orientable.
\end{itemize}
This guarantees the condition (\ref{thm:2.2}) of Theorem \ref{thm:2}.
By our definitions, we also have the following fundamental facts.
\begin{itemize}
	\item $P(F_{a,j})=0$, $e_{a,j} \notin B_{{\rm low},v}$, and
	${r^{\prime}}_G(e_{a,j})=0$ if $F_{a,j}$ is orientable. $e_{a,j} \in B_{{\rm low},v}$ and ${r^{\prime}}_G(e_{a,j})$ is the maximal even number smaller than or equal to $-r_G(e_{a,j})=P(F_{a,j})> 0$ if $F_{a,j}$ is non-orientable.
	\item $P(F_{b,j})=0$, $e_{b,j} \notin B_{{\rm up},v}$, and
	${r^{\prime}}_G(e_{b,j})=0$ if $F_{b,j}$ is orientable. $e_{b,j} \in B_{{\rm up},v}$ and ${r^{\prime}}_G(e_{b,j})$ is the maximal even number smaller than or equal to $-r_G(e_{b,j})=P(F_{b,j})>0$ if $F_{b,j}$ is non-orientable.
	\item $P_{\rm o}(F_{a,j})=-r_G(e_{a,j})-{r^{\prime}}_G(e_{a,j})=1$ ($P_{\rm o}(F_{a,j})=0$) if $e_{a,j} \in A_{{\rm low},v}$ (resp. $e_{a,j} \notin A_{{\rm low},v}$). $P_{\rm o}(F_{b,j})=-r_G(e_{b,j})-{r^{\prime}}_G(e_{b,j})=1$ ($P_{\rm o}(F_{b,j})=0$) if $e_{b,j} \in A_{{\rm up},v}$ (resp. $e_{b,j} \notin A_{{\rm up},v}$).
	\item $-r_G(e_{a,j})={r^{\prime}}_G(e_{a,j})$ (${r^{\prime}}_G(e_{a,j})=0$) if $e_{a,j} \in B_{{\rm low},v}-A_{{\rm low},v}$ (resp. $e_{a,j} \notin B_{{\rm low},v}$). $-r_G(e_{b,j})={r^{\prime}}_G(e_{b,j})$ (${r^{\prime}}_G(e_{b,j})=0$) if $e_{b,j} \in B_{{\rm up},v}-A_{{\rm up},v}$ (resp. $e_{b,j} \notin B_{{\rm up},v}$). 
	\end{itemize}
We also have the condition (\ref{thm:2.1.1}) of Theorem \ref{thm:2} by the relation $P_{\rm o}(F_a)=P_{\rm o}(F_b)$ with the relations $P_{\rm o}(F_a)={\deg}_{{\rm in},r_G<0,r_G \text{ is odd.}}(v)$ and $P_{\rm o}(F_b)={\deg}_{{\rm out},r_G<0,r_G \text{ is odd.}}(v)$.
\\
\ \\
Case 2 The difference $P_{\rm o}(F_b)-P_{\rm o}(F_a)$ is positive and even.\\
In this case, in the situation of Theorem \ref{thm:2}, we put $(i_a,i_b,r_G(e_{a,j}),r_G(e_{b,j}))$ as before. This guarantees the condition (\ref{thm:2.2}) of Theorem \ref{thm:2}.

We remember the condition (\ref{thm:1.2}) of Theorem \ref{thm:1}. In other words, $P_{\rm o}(F_b) \leq P(F_a)$, where the relation $P_{\rm o}(F_a) \leq P(F_b)$ holds by the condition on this case $P_{\rm o}(F_a)<P_{\rm o}(F_b)$ with $P_{\rm o}(F_b) \leq P(F_b)$. By adding $-P_{\rm o}(F_a)$ to the both sides of $P_{\rm o}(F_b) \leq P(F_a)$, we have $0<P_{\rm o}(F_b)-P_{\rm o}(F_a) \leq P(F_a)-P_{\rm o}(F_a)$. By our definitions and fundamental facts as presented in Case 1, we have
\begin{eqnarray*}
	0<P_{\rm o}(F_b)-P_{\rm o}(F_a) \leq P(F_a)-P_{\rm o}(F_a) &=&{\Sigma}_{j=1}^{i_a} (P(F_{a,j}))-P_{\rm o}(F_a)\\
	&=&{\Sigma}_{e_{a,j} \in {B_{{\rm low},v}}} (-r_G(e_{a,j}))-P_{\rm o}(F_a)\\
	&=&{\Sigma}_{e_{a,j} \in {B_{{\rm low},v}}} (-r_G(e_{a,j}))-{\Sigma}_{e_{a,j} \in {A_{{\rm low},v}}} 1\\
	&=&{\Sigma}_{e_{a,j} \in {B_{{\rm low},v}}} ({r^{\prime}}_G(e_{a,j}))
\end{eqnarray*}
and we have the inequality $P_{\rm o}(F_b)-P_{\rm o}(F_a) \leq {\Sigma}_{j=1}^{i_a} ({r^{\prime}}_G(e_{a,j}))={\Sigma}_{e_{a,j} \in {B_{{\rm low},v}}} ({r^{\prime}}_G(e_{a,j}))$.

By our definitions, we have $P_{\rm o}(F_a)={\deg}_{{\rm in},r_G<0,r_G \text{ is odd.}}(v)$ and $P_{\rm o}(F_b)={\deg}_{{\rm out},r_G<0,r_G \text{ is odd.}}(v)$, have the fact that ${\deg}_{{\rm out},r_G<0,r_G \text{ is odd.}}(v)-{\deg}_{{\rm in},r_G<0,r_G \text{ is odd.}}(v)$ is positive and even, and have the condition (\ref{thm:2.1.2}) of Theorem \ref{thm:2}.\\
\ \\
Case 3 The difference $P_{\rm o}(F_a)-P_{\rm o}(F_b)$ is positive and even..\\
In this case, in the situation of Theorem \ref{thm:2}, we put $(i_a,i_b,r_G(e_{a,j}),r_G(e_{b,j}))$ as before. This guarantees the condition (\ref{thm:2.2}) of Theorem \ref{thm:2}.

We remember the condition (\ref{thm:1.2}) of Theorem \ref{thm:1}. In other words, $P_{\rm o}(F_a) \leq P(F_b)$, where the relation $P_{\rm o}(F_b) \leq P(F_a)$ holds by the condition on this case $P_{\rm o}(F_b)<P_{\rm o}(F_a)$ with $P_{\rm o}(F_a) \leq P(F_a)$. We can discuss as Case 2 and have the condition (\ref{thm:2.1.3}) of Theorem \ref{thm:2}, by the symmetry.\\
\ \\
We have shown that the assumption of Theorem \ref{thm:2} is satisfied. This completes the proof.
\end{proof}
\subsection{A proof of Theorem \ref{thm:1}.}
\begin{proof}[A proof of Theorem \ref{thm:1}]
Theorem \ref{thm:3} implies the sufficiency.

We prove that the conditions (\ref{thm:1.1}) and (\ref{thm:1.2}) also give a necessary condition. This is a main ingredient of our paper. This is not discussed in \cite{kitazawa2}. We assume the existence of a Morse function $\tilde{f}_{F_a,F_b}:{\tilde{M}}_{F_a,F_b} \rightarrow \mathbb{R}$.

The condition (\ref{thm:1.1}) is clear by fundamental properties of closed surfaces.

We prove that the condition (\ref{thm:1.2}) is satisfied.

We prove the relation $P_{\rm o}(F_b) \leq P(F_a)$.

We need fundamental theory of attachments of so-called {\it handles}, corresponding to critical points of Morse functions, naturally. This is presented in \cite{milnor2} for example as classical, fundamental and important theory.

Hereafter, for the boundary of a manifold $X$, let us use $\partial X$. For example, for a $k$-dimensional (standard) disk $D^k$, $\partial D^k=S^{k-1}$.

In terms of handles, we discuss the structure of ${\tilde{M}}_{F_a,F_b}$ as a smooth manifold. For this, \cite[Lemma 6.6]{saeki1} and related arguments on handles are also important. We can understand the manifold ${\tilde{M}}_{F_a,F_b}$ by handles in the following steps.
\begin{itemize}
\item First, we prepare the product $F_a \times D^1=F_a \times [0,1]$ and identify $F_a$ with $F_a \times \{0\}$ by the map $i_{F_a}:F_a \rightarrow F_a \times \{0\}$ defined by $i_{F_a}(x)=(x,0)$.
\item Second, we choose suitable finitely many mutually disjoint copies of $S^1 \times D^1$ and $D^2$ smoothly embedded in $F_a \times \{1\}$. The number of the copies of $D^2$ must be even. Hereafter, we consider these copies of $D^2$ as copies of $D^2 \sqcup D^2$, instead.
\item Third, we attach so-called {\it $2$-handles} $D^2 \times D^1$ to the chosen copies of $S^1 \times D^1$ suitably along $\partial D^2 \times D^1$, one after another. There exists a one-to-one correspondence between 2-handles and singular points of index $2$ of the Morse function. Let the complementary set of $F_a=F_a \times \{0\}$ of the boundary of the resulting $3$-dimensional compact and connected manifold be denoted by $F_s$.

\item Last, we attach so-called {\it $1$-handles} $D^1 \times D^2$ to the chosen copies of $D^2 \sqcup D^2$ suitably along $\partial D^1 \times D^2$, one after another. There exists a one-to-one correspondence between $1$-handles and singular points of index $1$ of the function. The resulting manifold can be regarded to be ${\tilde{M}}_{F_a,F_b}$. 
The complementary set of $F_a=F_a \times \{0\}$ of the boundary of the resulting manifold is naturally regarded as $F_b$.
\end{itemize}

We investigate topological relations among $F_a$, $F_b$ and $F_s$. We investigate the values $P(F_a)$, $P(F_b)$ and $P(F_s)$ and the values $P_{\rm o}(F_a)$, $P_{\rm o}(F_b)$ and $P_{\rm o}(F_s)$.

Let the resulting surface obtained by attaching the $j$ handles, from the 1st $2$-handle to the $j$-th $2$-handle here, to $F_a \times \{1\}$, be denoted by $F_{a,j}$. This is, in other words, the complementary set of $F_a=F_a \times \{0\}$ of the boundary of the $3$-dimensional manifold obtained as a result of the attachment of the handles. By applying elementary topological arguments on closed surfaces, we have either of the following.
\begin{itemize}
\item The relation $P(F_{a,j+1})=P(F_{a,j})$ holds and the number of connected components of $F_{a,j+1}$ is greater by $1$ than that of $F_{a,j}$. Here the handle decomposes a connected component of $F_{a,j}$ into two connected summands of it. Furthermore, the relation $P_{\rm o}(F_{a,j+1})=P_{\rm o}(F_{a,j})$ or $P_{\rm o}(F_{a,j+1})=P_{\rm o}(F_{a,j})+2$ holds: the former holds in the case for at least one of the resulting new connected components $F_{a,j+1,{\rm c}}$, $P(F_{a,j+1,{\rm c}})$ is even, and the latter holds in the case both of the two resulting new connected components $F_{a,j+1,{\rm c}_1}$ and $F_{a,j+1,{\rm c}_2}$, the integers $P(F_{a,j+1,{\rm c}_1})$ and $P(a_{s,j+1,{\rm c}_2})$ are odd.
\item The numbers of connected components of $F_{a,j}$ and $F_{a,j+1}$ are same. In addition, either of the following holds.
\begin{itemize}
	\item The relation $P(F_{a,j+1})=P(F_{a,j})$ holds. For exactly one connected component of $F_{a,j}$, the topology is changed: before this change, the connected component of $F_{a,j}$ is orientable and the resulting connected component of $F_{a,j+1}$ is still orientable after the change.
\item The relation $P(F_{a,j+1})=P(F_{a,j})-2$ holds. For exactly one component of $F_{a,j}$, the topology is changed: before this change, the connected component of $F_{a,j}$ is non-orientable and the resulting connected component of $F_{a,j+1}$ is still non-orientable after the change.
\item The relation $P(F_{a,j+1})=P(F_{a,j})-2k$ holds for some positive integer $k$ and a connected component which is orientable appears newly in $F_{a,j+1}$: the connected component is changed from a closed, connected and non-orientable connected component of $F_{a,j}$.  
\end{itemize}
Furthermore, the relation $P_{\rm o}(F_{a,j+1})=P_{\rm o}(F_{a,j})$ holds in this case.
\end{itemize}

We have the following from this argument. \\
\ \\
Claim A  $P(F_{s}) \leq P(F_{a,j}) \leq P(F_a)$.\\
\ \\
Let the resulting surface obtained by attaching the $j$ handles, from the 1st $1$-handle to the $j$-th $1$-handle here, to $F_s$, be denoted by $F_{s,j}$. More precisely, 
$F_a \times \{1\}$ is changed to $F_{s}$ after attaching the $2$-handles, and,
the $1$-handles are attached to the interior of $(F_a \times \{1\}) \bigcap F_s$. The surfaces $F_{s,j}$ and $F_s$ are, in other words, the complementary sets of $F_a=F_a \times \{0\}$ of the $3$-dimensional manifold obtained as a result of the attachment of the handles. As a kind of duality to the previous argument, we have either of the following.
\begin{itemize}
\item The relation $P(F_{s,j+1})=P(F_{s,j})$ holds and the number of connected components of $F_{s,j+1}$ is smaller by $1$ than that of $F_{s,j}$. Here the handle connects two chosen connected components of $F_{s,j}$. Furthermore, the relation $P_{\rm o}(F_{s,j+1})=P_{\rm o}(F_{s,j})$ or $P_{\rm o}(F_{s,j+1})=P_{\rm o}(F_{s,j})-2$ holds: the former holds in the case for at least one of the chosen connected components $F_{s,j,{\rm c}}$, $P(F_{s,j,{\rm c}})$ is even, and the latter holds in the case both of the two chosen connected components $F_{s,j,{\rm c}_1}$ and $F_{s,j,{\rm c}_2}$, the integers $P(F_{s,j,{\rm c}_1})$ and $P(F_{s,j,{\rm c}_2})$ are odd.
\item The numbers of connected components of $F_{s,j}$ and $F_{s,j+1}$ are same. In addition, either of the following holds.
\begin{itemize}
	\item The relation $P(F_{s,j+1})=P(F_{s,j})$ holds. For exactly one connected component of $F_{s,j}$, the topology is changed: before this change, the connected component of $F_{s,j}$ is orientable and the resulting component of $F_{s,j+1}$ is still orientable after the change.
\item The relation $P(F_{s,j+1})=P(F_{s,j})+2$ holds. For exactly one connected component of $F_{s,j}$, the topology is changed: before this change, the connected component of $F_{a,j}$ is non-orientable and the resulting connected component of $F_{a,j+1}$ is non-orientable after the change.
\item The relation $P(F_{s,j+1})=P(F_{s,j})+2k$ holds for some positive integer $k$ and a component which is non-orientable appears newly in $F_{s,j+1}$: the connected component is changed from a closed, connected and orientable connected component of $F_{s,j}$. 
\end{itemize}
Furthermore, the relation $P_{\rm o}(F_{s,j+1})=P_{\rm o}(F_{s,j})$ holds in this case.
\end{itemize}

We have the following from this argument. \\
\ \\
Claim B $P_{\rm o}(F_{b}) \leq P_{\rm o}(F_{s,j}) \leq P_{\rm o}(F_{s})$. \\
\ \\
From our definition, the relation $P_{\rm o}(F_{s}) \leq P(F_{s})$ holds. From Claim A and Claim B, we have the relation $P_{\rm o}(F_{b}) \leq P(F_a)$.

We have $P_{\rm o}(F_{a}) \leq P(F_b)$ similarly, by the symmetry. 

Thus we prove that the condition (\ref{thm:1.2}) is satisfied. 

We have shown the necessity.

This completes our proof.
	\end{proof}

\section{Our conclusion and remarks.}

Theorem \ref{thm:1} implies Problem \ref{prob:1} is solved in the case $m=3$ completely.

We explain some examples.

\begin{Ex}
\begin{enumerate}
	\item Let $F_a$ be a closed, connected and orientable surface and $F_b={\mathbb{R}P}^2 \sqcup {\mathbb{R}P}^2$. 
We have $P(F_a)=P_{\rm o}(F_a)=0$. We also have $P(F_b)=P_{\rm o}(F_b)=2$.
This satisfies the condition (\ref{thm:1.1}) and does not satisfy the condition (\ref{thm:1.2}), in Theorem \ref{thm:1}. 
\item 
Let $F_a$ be a closed, connected and non-orientable surface and $F_b={\mathbb{R}P}^2 \sqcup {\mathbb{R}P}^2$. 
We can see that $P(F_a)$ is positive. Furthermore, $P(F_a)$ is even (odd) if and only if $P_{\rm o}(F_a)=0$ (resp. $P_{\rm o}(F_a)=1$). 

This case satisfies the conditions of Theorem \ref{thm:1} if and only if $P(F_a)>0$ is even.  
\end{enumerate}
\end{Ex}
As another result, the previous sufficient condition of \cite[Main Theorem 2]{kitazawa2} has been simplified to Theorem \ref{thm:3}.

Our result is also regarded as a positive answer to \cite[Remarks 2 and 3]{kitazawa2}, for example. 

Last, we review cases other than our present case for Problem \ref{prob:1}, again.

In \cite{kitazawa3}, the author has given answers to Problem \ref{prob:1} in a specific case for general $m$ (\cite[Main Theorems 1 and 2]{kitazawa3}). This is also on cases where connected components of $F_a$ and $F_b$ are boundaries of compact and connected manifolds of a certain class, the class of {\it MFHs}: the manifolds of the boundaries are of the class of {\it BMFHs}. The author has also completely and affirmatively solved the case
where $F_a$ and $F_b$ are diffeomorphic to $S^{m-1}$ or of the form ${\sharp}_{j=1}^l (S^{k_j} \times S^{m-k_j-1})$ with $m>2$, $l \geq 0$ and $1 \leq k_j \leq m-2$. He has also completely and affirmatively solved the case
 $m=4$ under the constraint that $F_a$ and $F_b$ are orientable there.

Our next problem is, the case $m=4$ where the manifolds $F_a$ and $F_b$ may be non-orientable, for example.

\section{Acknowledgment.}
The author would like to thank Osamu Saeki for fruitful discussions on \cite{saeki2} with \cite{kitazawa1, kitazawa3}. This has motivated the author to present \cite{kitazawa2} and the present work.
The author would also like to thank all related to the conference
(https://www.japanese-singularities.net/workshop/ws20250603.html) for the opportunity to give a talk related to \cite{kitazawa1, kitazawa2, kitazawa3} and the present paper.
\section{Conflict of interest and Data availability.}
\noindent {\bf Conflict of interest.} \\
The author worked at Institute of Mathematics for Industry (https://www.jgmi.kyushu-u.ac.jp/en/about/young-mentors/). This project is closely related to our study. He thanks them for their encouragements. The author is also a researcher at Osaka Central
Advanced Mathematical Institute (OCAMI researcher): this is supported by MEXT Promotion of Distinctive Joint Research Center Program JPMXP0723833165. He is not employed there. However, he also thanks them for such an opportunity.\\
The talk in the conference
(https://www.japanese-singularities.net/workshop/ws20250603.html) is supported by  JSPS KAKENHI Grant Number JP23H01075.\\
\ \\
{\bf Data availability.} \\
We have generated no data other than the present article. Note that the present paper is also seen as a kind of addenda to \cite{kitazawa2}.

\end{document}